\documentclass[12pt]{amsart}
\usepackage[colorlinks=true,citecolor=black,linkcolor=black,urlcolor=blue]{hyperref}
\usepackage{amsmath}
\usepackage[english,  activeacute]{babel}
\usepackage[utf8]{inputenc}
\usepackage{amssymb}
\usepackage{amsthm}
\usepackage{graphics,graphicx}
\usepackage{enumerate}
\usepackage{array}
\usepackage{bm}
\usepackage{a4wide}
\setcounter{tocdepth}{3}
\usepackage{color, url}
\usepackage{float}
\usepackage{textcomp}

%
 \newtheorem{thm}{Theorem}[section]
 \newtheorem{cor}[thm]{Corollary}
 
 \newtheorem{prop}[thm]{Proposition}
 \theoremstyle{definition}
 \newtheorem{defn}[thm]{Definition}
 \theoremstyle{remark}
 \newtheorem{rem}[thm]{Remark}
 \newtheorem*{ex}{Example}
 \numberwithin{equation}{section}

\theoremstyle{plain}

\thispagestyle{empty}

\title[The $(l,r)$-Stirling numbers: a combinatorial approach.]{The $(l,r)$-Stirling numbers: a combinatorial approach.}

\author[BELBACHIR H.]{Belbachir Hac\`ene}

\address{%
USTHB, Faculty of Mathematics, RECITS Laboratory\\
P.O. Box 32\\ 
El Alia, 16111, Bab Ezzouar, Algiers \\ 
Algeria}

\email{hacenebelbachir@gmail.com, hbelbachir@usthb.dz}

\author[DJEMMADA Y.]{Djemmada Yahia}
\address{%
USTHB, Faculty of Mathematics, RECITS Laboratory\\
P.O. Box 32\\ 
El Alia, 16111, Bab Ezzouar, Algiers \\ 
Algeria}
\email{yahia.djem@gmail.com, ydjemmada@usthb.dz}

\date{\today}
\subjclass[2010]{Primary 11B73, 11B83; Secondary 05A05, 05A18, 05E05.}

\keywords{Permutations, Set partitions, Stirling numbers, Symmetric functions, $r$-Stirling numbers.}

\begin{document}
\begin{abstract}
  This work deals with a new generalization of $r$-Stirling numbers using $l$-tuple of permutations and partitions called $(l,r)$-Stirling numbers of both kinds. We study various properties of these numbers using combinatorial interpretations and symmetric functions. Also, we give a limit representation of the multiple zeta function using $(l,r)$-Stirling of the first kind.
\end{abstract}
\maketitle

\section{Introduction}

Let $\sigma$ be a permutation of the set $[n]=\{1,2,\dots,n\}$ having $k$ cycles $c_1,c_2,\dots,c_k$. A \textit{cycle leaders set} of $\sigma$, denoted $cl(\sigma)$, is the set of the smallest elements on their cycles, i. e.  
\begin{equation*}
cl(\sigma)=\{\min c_1, \min c_2, \dots ,\min c_k\}.
\end{equation*}

As the same way, let $\pi$ be a partition of the set $[n]=\{1,2,\dots,n\}$ into $k$ blocks $b_1,b_2,\dots,b_k$. A \textit{block leaders set} of $\pi$, denoted $bl(\pi)$, is the set of the smallest elements on their blocks,  i. e.
\begin{equation*}
bl(\pi)=\{\min b_1, \min b_2, \dots ,\min b_k\}.
\end{equation*}
\begin{ex}
\begin{itemize}
\item[]
\item For $n=6$, the permutation $\sigma=(13)(245)(6)$ have the set of cycle leaders $cl(\sigma)=\{1,2,6\}$.
\item For $n=7$, the partition $\pi=1,2,4|3,5,7|6$ have the set of block leaders $bl(\pi)=\{1,3,6\}$.
\end{itemize}
\end{ex}

It is well known that the \textbf{Stirling numbers of the first kind}, denoted $ {n \brack k}$, count the number of all permutations of $[n]$ having exactly $k$ cycles, and \textbf{Stirling numbers of the second kind}, denoted $ {n \brace k}$, count the number of all partitions of $[n]$ having exactly $k$ blocks.

One of the most interesting generalization of Stirling numbers was the $r$-Stirling numbers of both kind introduced By Broder \cite{Bro}. Analogously to the classical Stirling numbers of both kinds, the author considered that $r$-Stirling numbers of the first kind ${n \brack k}_r$ (resp. the second kind ${n \brace k}_r$) counts the number of permutations $\sigma$ (resp. partitions $\pi$) having exactly $k$ cycles (resp. $k$ blocks) such that the $r$ first elements $1,2,\dots,r$ lead.

Dumont, in \cite{Dum}, gives the first interpretation for the "central factorial" numbers of the second kind $U(n,k)$ given by the recurrence
\begin{equation}
U(n,k)=U(n-1,k-1)+k^2 U(n-1,k), \qquad \text{for } 0<k\leq n;
\end{equation}
where $U(n,k)=T(2n,2k)$.

Then, using the notion of \textit{quasi-permutations}, Foata and Han \cite{Foa} , showed that $U(n,k)$
counts the number of pair $(\pi_1,\pi_2)$-partitions of $[n]$ into $k$ blocks such that $bl(\pi_1)=bl(\pi_2)$.

In this work, we give an extension of the $r$-Stirling numbers of both kinds with considering $l$-tuple partitions (and permutations) of Dumont's partition model \cite{Dum,Foa}.

This paper is organized as follows. In Section \ref{Sec2} and Section \ref{Sec3}, we introduce the $(l,r)$-Stirling numbers of both kinds. Some properties are given as recurrences, orthogonality, generating functions, a relation between  $(l,r)$-Stirling numbers and Bernoulli polynomials via Faulhaber sums and symmetric functions. In Section \ref{Sec4}, we show the relations between multiple-zeta function and the $(l,r)$-Stirling numbers. Finally, in Section \ref{Sec6}, we discuss some remarks which connect this numbers to the rooks polynomials \cite{Kr}.

\section{The $(l,r)$-Stirling numbers of both kinds}\label{Sec2}
Let us consider the following generalization,
\begin{defn}The $(l,r)$-Stirling number of the first kind ${n \brack k}_r^{(l)}$ counts the number of $l$-tuple of permutations $(\sigma_1,\sigma_2,\dots,\sigma_l)$ of $[n]$ having exactly $k$ cycles such that  $1,2, \dots,r$ first elements lead, and  $$cl(\sigma_1)=cl(\sigma_2)=\dots=cl(\sigma_l).$$
\end{defn}

\begin{defn}The $(l,r)$-Stirling number of the second kind ${n \brace k}_r^{(l)}$ counts the number of $l$-tuple of partitions $(\pi_1,\pi_2,\dots,\pi_l)$ of $[n]$ having exactly $k$ blocks such that  $1,2, \dots,r$ first elements lead, and  $$bl(\pi_1)=bl(\pi_2)=\dots=bl(\pi_l).$$
\end{defn}

\begin{thm}\label{RecS1} The $(l,r)$-Stirling numbers of the first satisfy the following recurrences
\begin{equation}
{n \brack k}_r^{(l)}={n-1 \brack k-1}_r^{(l)}+(n-1)^l{n-1 \brack k}_r^{(l)}, \qquad \text{for }  n >  r
\end{equation}
and
\begin{equation}
{n \brack k}_r^{(l)}=\frac{1}{(r-1)^l}\left({n \brack k-1}_{r-1}^{(l)}- {n \brack k-1}_r^{(l)}\right), \qquad \text{for }  n \geq r > 1.
\end{equation}
with boundary conditions ${n \brack k}_r^{(l)} = 0,$ for $n<r$; and ${n \brack k}_r^{(l)} = \delta_{k,r}$, for $n=r$.
\end{thm}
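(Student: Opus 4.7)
The boundary conditions are immediate from the definition: if $n<r$ there are not enough labels to designate $r$ distinct cycle leaders, so the count is $0$; if $n=r$ every $\sigma_i$ must have $1,\ldots,r$ as leaders, which forces each $\sigma_i$ to be the identity on $[r]$ (with $r$ singleton cycles), giving $\delta_{k,r}$.

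For the first recurrence I would split the $l$-tuples counted by ${n \brack k}_r^{(l)}$ according to the role of the maximal label $n$. Because $n$ exceeds every other element, it is a cycle leader in $\sigma_i$ if and only if it forms a singleton cycle, and the common-leaders assumption $cl(\sigma_1)=\cdots=cl(\sigma_l)$ forces this to hold in all $l$ components simultaneously or in none. If $n$ is a singleton in each $\sigma_i$, deleting it yields an $l$-tuple on $[n-1]$ with $k-1$ cycles and unchanged common leaders $\supseteq\{1,\ldots,r\}$, contributing ${n-1 \brack k-1}_r^{(l)}$. Otherwise $n$ is interior in each cycle, and in each component it may be reinserted immediately after any of the $n-1$ other labels, giving $(n-1)^l$ placements and the term $(n-1)^l\,{n-1 \brack k}_r^{(l)}$. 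Since $n$ is maximal, such insertions never alter the leader set, so the common-leaders invariant is preserved automatically.

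For the second recurrence I would rewrite it as
\[(r-1)^l\,{n\brack k}_r^{(l)}={n\brack k-1}_{r-1}^{(l)}-{n\brack k-1}_r^{(l)},\]
whose right-hand side counts $l$-tuples with $k-1$ cycles in which $1,\ldots,r-1$ all lead but $r$ does not. In each such $\sigma_i$, $r$ belongs to a unique cycle whose leader $a_i$ lies in $\{1,\ldots,r-1\}$; writing that cycle as $(a_i,y_1,\ldots,y_p,r,x_1,\ldots,x_m)$, detach the suffix starting at $r$ to form a new cycle $(r,x_1,\ldots,x_m)$ while retaining $(a_i,y_1,\ldots,y_p)$. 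The result is an $l$-tuple with $k$ cycles in which $r$ now leads, and the inverse splices the cycle of $r$ immediately after $a_i$ in each $\sigma_i$; the free choice of $(a_1,\ldots,a_l)\in\{1,\ldots,r-1\}^l$ accounts for the factor $(r-1)^l$. A quicker alternative, once the first recurrence is proved, is to derive the second algebraically by induction on $n$, with the base case $n=r$ reducing to the identity ${r\brack r-1}_{r-1}^{(l)}=(r-1)^l$ (choose the pair $(a_i,r)$ in each component).

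The main obstacle will be pinning down the split--splice bijection for the second recurrence so that it respects the cross-component constraint: detaching at $r$ must add $r$ uniformly to the leader set of every component, and the inverse splicing must reconstruct the original cycle structure unambiguously from the tuple $(a_1,\ldots,a_l)$. Once that is verified, the rest is routine bookkeeping on cycle counts and leader sets.
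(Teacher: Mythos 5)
Your argument follows the paper's proof essentially step for step: the first recurrence by classifying the role of the largest element $n$ (a singleton cycle in every component, versus insertion after one of the $n-1$ existing elements in each component, the common-leader constraint being preserved automatically), and the second by double counting the $l$-tuples with $k-1$ cycles in which $1,\ldots,r-1$ lead but $r$ does not. The one point you must repair is your description of the inverse of the split--splice bijection: splicing the cycle $(r,x_1,\ldots,x_m)$ \emph{immediately after} $a_i$ produces $(a_i,r,x_1,\ldots,x_m,y_1,\ldots,y_p)$, which is not the permutation you detached from unless $p=0$; indeed the image of that insertion contains only permutations in which $r$ immediately follows its cycle leader, so as stated it is neither the inverse of your detach map nor surjective onto the set being counted. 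The correct inverse, as in the paper, appends the cycle of $r$ at the \emph{end} of the cycle led by $a_i$ (each cycle written starting at its leader), i.e.\ concatenates $(a_i,y_1,\ldots,y_p)$ with $(r,x_1,\ldots,x_m)$; with this fix the detach map is a bijection onto pairs consisting of an $l$-tuple counted by ${n \brack k}_r^{(l)}$ together with a choice $(a_1,\ldots,a_l)\in\{1,\ldots,r-1\}^l$, which gives the factor $(r-1)^l$ exactly as you intend. Everything else, including the boundary conditions and the treatment of the cross-component constraint, matches the paper's argument.
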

\begin{proof}

The $(\sigma_1,\sigma_2,\dots,\sigma_l)$-permutations of the set $[n]$ having $k$ cycles such that $1,2,\dots,r$ first elements are in distinct cycles and $cl(\sigma_1)=cl(\sigma_2)=\dots=cl(\sigma_l)$ is either obtained from:
\begin{itemize}
\item Inserting the $nth$ elements after any element in each permutation of $(\sigma_1,\sigma_2,\dots,\sigma_l)$-permutations of the set $[n-1]$ having $k$ cycles such that $1,2,\dots,r$ first elements are in distinct cycles and $cl(\sigma_1)=cl(\sigma_2)=\dots=cl(\sigma_l)$, hence there are $(n-1)^l {n-1 \brack k}_r^{(l)}$ choices.
\item The $nth$ element forms a cycle in each permutation of $(\sigma_1,\sigma_2,\dots,\sigma_l)$-permutations, the remaining $[n-1]$ have to be $(\sigma_1,\sigma_2,\dots,\sigma_l)$-permuted in $(k-1)$ cycles  under the preceding conditions, hence there are ${n-1 \brack k-1}_r^{(l)}$.
\end{itemize}

This correspondence yields the first recurrence.

For the second recurrence, we use the double counting principle. Let us count the numbers of $(\sigma_1,\sigma_2,\dots,\sigma_l)$-permutations of the set $[n]$ having $(k-1)$ cycles such that $1,\dots,r-1$ are cycle leaders but $r$ is not, with $cl(\sigma_1)=cl(\sigma_2)=\dots=cl(\sigma_l)$, this is either obtained from: 
\begin{itemize}
\item We count the $(\sigma_1,\sigma_2,\dots,\sigma_l)$-permutations of the set $[n]$ having $(k-1)$ cycles such that $1,\dots,r-1$ are cycle leaders then we exclude from them the $(\sigma_1,\sigma_2,\dots,\sigma_l)$-permutations having $r$ as cycle leader. That gives $${n \brack k-1}_{r-1}^{(l)}- {n \brack k-1}_r^{(l)},$$
\item Or we count the $(\sigma_1,\sigma_2,\dots,\sigma_l)$-permutations of the set $[n]$ having $k$ cycles such that $1,\dots,r$ are cycle leaders then we appending the cycle having $r$ as leader at the end of a cycle having a smaller leader. We have $(r-1)$ choices to do in each permutation. That gives 
$$(r-1)^l{n \brack k}_r^{(l)},$$
\end{itemize}
from the two ways of counting we get the result. 
\end{proof}
\begin{thm}\label{RecS2}The $(l,r)$-Stirling numbers of the second satisfy the following recurrences 
\begin{equation}\label{IdB1}
{n \brace k}_r^{(l)}={n-1 \brace k-1}_r^{(l)}+k^l{n-1 \brace k}_r^{(l)}, \qquad \text{for }  n >  r
\end{equation}
and
\begin{equation}\label{IdB2}
{n \brace k}_r^{(l)}={n \brace k}_{r-1}^{(l)}- (r-1)^l{n-1 \brace k}_{r-1}^{(l)}, \qquad \text{for }  n \geq r > 1.
\end{equation}
with boundary conditions ${n \brace k}_r^{(l)} = 0$, for $n<r$; and ${n \brace k}_r^{(l)} = \delta_{k,r}$, for $n=r$.
\end{thm}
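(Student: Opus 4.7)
The plan is to mimic, step by step, the combinatorial argument used in Theorem~\ref{RecS1}, replacing permutations by partitions and cycles by blocks. Both identities will follow from a case analysis on a single distinguished element, together with a careful verification that the common leader condition is preserved.

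For recurrence \eqref{IdB1}, I would classify an $l$-tuple $(\pi_1,\dots,\pi_l)$ counted by ${n \brace k}_r^{(l)}$ according to the role of the element $n$. Since $n=\max[n]$, if $n$ is the minimum of its block in some $\pi_i$ then $\{n\}$ is a singleton block there; the common leader condition then forces $\{n\}$ to be a block in every $\pi_j$. Removing this singleton from each $\pi_j$ yields an $l$-tuple of partitions of $[n-1]$ into $k-1$ blocks, still with $1,\dots,r$ leading and the same common leader set, contributing ${n-1 \brace k-1}_r^{(l)}$. Otherwise, $n$ is non-leading in every $\pi_j$; deleting it from its block does not change the leader set of any $\pi_j$, so we obtain an $l$-tuple counted by ${n-1 \brace k}_r^{(l)}$, and the reverse construction places $n$ independently into any of the $k$ blocks of each $\pi_j$, producing the factor $k^l$.

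For recurrence \eqref{IdB2}, I would double-count the $l$-tuples of partitions of $[n]$ into $k$ blocks having common leader set containing $\{1,\dots,r-1\}$ but not $r$. One count is immediate: subtracting those in which $r$ is also a common leader from those in which only $1,\dots,r-1$ are required to lead gives ${n \brace k}_{r-1}^{(l)}-{n \brace k}_r^{(l)}$. For the second count, since $r$ is non-leading in every $\pi_i$, its block has minimum in $\{1,\dots,r-1\}$; I would delete $r$ from each $\pi_i$, relabel $\{r+1,\dots,n\}$ as $\{r,\dots,n-1\}$, and check that the resulting $l$-tuple is counted by ${n-1 \brace k}_{r-1}^{(l)}$. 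The inverse reinserts $r$ into the block led by some $j_i\in\{1,\dots,r-1\}$ in each $\pi_i$ independently, contributing the factor $(r-1)^l$. Equating the two counts and solving for ${n \brace k}_r^{(l)}$ yields \eqref{IdB2}.

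The boundary conditions are immediate: if $n<r$ no partition of $[n]$ can have $r$ as a block leader, while if $n=r$ the requirement that each of $1,\dots,r$ lead forces every element to be a singleton in every $\pi_i$, giving a unique tuple when $k=r$ and none otherwise. The only real subtlety, which I would verify carefully at each step, is the preservation of the common leader set under the deletion/insertion bijections: for \eqref{IdB1} this uses that $n$ is the maximum, and for \eqref{IdB2} it uses that $r$ is not a leader, so that removing it never erases a leader from any $\pi_i$.
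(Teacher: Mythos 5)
Your proposal is correct and follows essentially the same route as the paper: identity \eqref{IdB1} by classifying the role of the element $n$ (singleton block versus insertion into one of the $k$ blocks of each $\pi_i$, giving $k^l$), and identity \eqref{IdB2} by double counting the tuples in which $1,\dots,r-1$ lead but $r$ does not. Your explicit checks of the relabeling and of the preservation of the common leader set are just a more careful rendering of steps the paper leaves implicit.
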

\begin{proof}
As in Theorem \ref{RecS1}, the $(\pi_1,\pi_2,\dots,\pi_l)$-partitions of the set $[n]$ into $k$ blocks such that $1,2,\dots,r$ first elements are in distinct blocks and $bl(\pi_1)=bl(\pi_2)=\dots=bl(\pi_l)$ is either obtained from:
\begin{itemize}
\item Inserting the $nth$ elements in a block of each partition of $(\pi_1,\pi_2,\dots,\pi_l)$-partitions of the set $[n-1]$ into $k$ blocks such that $1,2,\dots,r$ first elements are in distinct blocks and $bl(\pi_1)=bl(\pi_2)=\dots=bl(\pi_l)$, hence there are $k^l {n-1 \brace k}_r^{(l)}$ choices (the position of the $nth$ element in a block doesn't matter).

\item The $nth$ element forms a block in each partition of $(\pi_1,\pi_2,\dots,\pi_l)$-partitions, the remaining $[n-1]$ have to be $(\pi_1,\pi_2,\dots,\pi_l)$-partitioned into $(k-1)$ blocks  under the preceding conditions, hence there are ${n-1 \brace k-1}_r^{(l)}$.
\end{itemize}

For the Identity \eqref{IdB2}, we use the double counting principle to count the numbers of $(\pi_1,\pi_2,\dots,\pi_l)$-partitions of $[n]$ into $k$ blocks such that $1,2,\dots,(r-1)$ are block leaders but $r$ is not, with $bl(\pi_1)=bl(\pi_2)=\dots=bl(\pi_l)$, this is either obtained from:
\begin{itemize}
\item We count the $(\pi_1,\pi_2,\dots,\pi_l)$-partitions of the set $[n]$ into $k$ blocks such that $1,\dots,r-1$ are block leaders then we exclude from them the $(\pi_1,\pi_2,\dots,\pi_l)$-partitions having $r$ as block leader, with $bl(\pi_1)=bl(\pi_2)=\dots=bl(\pi_l)$. That gives $${n \brace k}_{r-1}^{(l)}- {n \brace k}_r^{(l)},$$
\item Or we count the $(\pi_1,\pi_2,\dots,\pi_l)$-partitions of the set $[n]$\textbackslash$\{r\}$ into $k$ blocks such that $1,\dots,r-1$ are block leaders then we include the  element $\{r\}$ in any block having a smaller leader then $r$. We have $(r-1)$ choices to do in each partition of $(\pi_1,\pi_2,\dots,\pi_l)$-partitions, that gives 
$$(r-1)^l{n-1 \brace k}_{r-1}^{(l)},$$
from the two ways of counting we get the result.
\end{itemize} 
\end{proof}
\

\begin{rem} Using the previous recurrences it is easy to get the following special cases
\begin{align} 
\label{specS1}&{n \brack r}_r^{(l)}=r^l(r+1)^l\cdots(n-2)^l(n-1)^l=(r^{\overline{n-r}})^l,\qquad \text{for }  n\geq r\\
\intertext{and }
\label{specS2}&{n \brace r}_r^{(l)}=r^{l(n-r)},\qquad \text{for } n\geq r.
\end{align}
\end{rem}
\section{Orthogonality of $(l,r)$-Stirling numbers pair}
\begin{thm}For $n \geq k \geq 0$, for all positive integer $l$, we have the two orthogonality relations bellow
\begin{equation}\label{Orth1}
\sum_{j}{n \brack j}_r^{(l)}{j \brace k}_r^{(l)}(-1)^j=\left\lbrace\begin{array}{l r}
(-1)^n \delta_{n,k}, &\qquad \text{for }  n \geq r;\\
&\\
0, &\qquad \text{for }  n < r
\end{array}\right.	
\end{equation}
and
\begin{equation}\label{Orth2}
\sum_{j}{j \brack n}_r^{(l)}{k \brace j}_r^{(l)}(-1)^j=\left\lbrace\begin{array}{l r}
(-1)^n \delta_{n,k}, &\qquad \text{for }  n \geq r;\\
&\\
0, &\qquad \text{for }  n < r.
\end{array}\right.
\end{equation}
\end{thm}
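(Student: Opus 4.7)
The plan is to prove \eqref{Orth1} by induction on $n$ using the recurrences of Theorems \ref{RecS1} and \ref{RecS2}, and then to deduce \eqref{Orth2} from \eqref{Orth1} by a lower-triangular matrix inversion argument.

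First I would dispose of the boundary cases of \eqref{Orth1}. For $n<r$ every ${n \brack j}_r^{(l)}$ vanishes, so the sum is $0$. For $n=r$ the boundary condition ${r \brack j}_r^{(l)}=\delta_{j,r}$ collapses the sum to $(-1)^r\delta_{k,r}$, which matches $(-1)^n\delta_{n,k}$.

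For the inductive step, set $T(n,k)$ equal to the left-hand side of \eqref{Orth1} and, for $n>r$, expand ${n \brack j}_r^{(l)}$ by the first recurrence of Theorem \ref{RecS1}. This splits $T(n,k)$ into $(n-1)^l\,T(n-1,k)$ plus a sum in which I would shift $j\mapsto j+1$ and then rewrite ${j+1 \brace k}_r^{(l)}$ via \eqref{IdB1}; the shift is harmless because ${n-1 \brack j}_r^{(l)}$ vanishes whenever $j<r$, so every surviving term satisfies $j+1>r$ where \eqref{IdB1} applies. Reassembly yields the clean two-term recursion
$$T(n,k)=-T(n-1,k-1)+\bigl((n-1)^l-k^l\bigr)T(n-1,k),$$
into which the inductive hypothesis substitutes cleanly: the $\delta_{n-1,k}$ contribution is killed by the factor $(n-1)^l-k^l$, while $-T(n-1,k-1)$ delivers the required $(-1)^n\delta_{n,k}$.

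For \eqref{Orth2} I would argue by matrix inversion. Restricted to $n,k\geq r$, both $\bigl[{n \brack k}_r^{(l)}\bigr]$ and $\bigl[{n \brace k}_r^{(l)}\bigr]$ are lower-triangular with unit diagonal (from the boundary values together with \eqref{specS1} and \eqref{specS2} evaluated at $n=r$, combined with the first recurrences). After dividing \eqref{Orth1} by $(-1)^n$, it asserts that the signed first-kind matrix $\bigl[(-1)^{k-n}{n \brack k}_r^{(l)}\bigr]$ is a left inverse of $\bigl[{n \brace k}_r^{(l)}\bigr]$; since in this triangular setting a left inverse is automatically a right inverse, multiplying in the reverse order again gives the identity, and a routine relabelling of the dummy indices and rearrangement of the sign factor recovers \eqref{Orth2}. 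The case $n<r$ collapses by the vanishing boundary condition. The main obstacle I expect is the bookkeeping in the index-shift step of the first part, and it is cleared by noticing that ${n-1 \brack j}_r^{(l)}$ enforces $j\geq r$ on every nonzero term, so that \eqref{IdB1} may be applied without reservation.
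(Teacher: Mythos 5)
Your proof of \eqref{Orth1} is essentially the paper's own argument: the same induction on $n$ with the same boundary cases, and your two-term recursion $T(n,k)=-T(n-1,k-1)+\bigl((n-1)^l-k^l\bigr)T(n-1,k)$ is exactly what the paper's computation amounts to after it expands ${n \brack j}_r^{(l)}$ by Theorem \ref{RecS1} and the shifted factor ${j \brace k}_r^{(l)}$ by \eqref{IdB1}; your remark that the surviving terms force $j\geq r$, so that \eqref{IdB1} is legitimately applicable, is a point the paper glosses over in the same way. Where you genuinely diverge is \eqref{Orth2}: the paper simply runs a second, parallel induction on $k$ ("as same as the previous proof"), whereas you deduce \eqref{Orth2} from \eqref{Orth1} by viewing the signed first-kind array and the second-kind array as infinite lower-triangular matrices with unit diagonal (indices $\geq r$) and invoking the fact that in this triangular, row-finite setting a left inverse is a two-sided inverse. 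This is a clean and correct route (the needed triangularity ${n \brack k}_r^{(l)}={n \brace k}_r^{(l)}=0$ for $k>n$ and the unit diagonal follow from the recurrences with base case $n=r$, or directly from the combinatorial definitions), and it buys you \eqref{Orth2} for free instead of a second induction; the paper's route keeps everything at the level of the recurrences but duplicates the work. One small imprecision: for the $n<r$ case of \eqref{Orth2} the vanishing you need is ${j \brack n}_r^{(l)}=0$ when the \emph{lower} index satisfies $n<r$ (fewer cycles than required leaders), which is immediate combinatorially but is not literally the stated boundary condition; worth a clarifying clause, not a gap.
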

\begin{proof}
Let us start by Identity \eqref{Orth1}. The proof goes by induction on $n$
\begin{itemize}
\item For $n<r$ the assertion is obvious.
\item For $n=r$,
\begin{equation*}
\begin{split}
\sum_{j}{r \brack j}_r^{(l)}{j \brace k}_r^{(l)}(-1)^j&={r \brace k}_r^{(l)}(-1)^r=(-1)^r\delta_{k,r}.\end{split}
\end{equation*}
\item For $n> r$, Theorem \ref{RecS1} and the induction hypothesis implies that
\begin{equation*}
\begin{split}
\sum_{j}{n \brack j}_r^{(l)}{j \brace k}_r^{(l)}(-1)^j&=\sum_j \left({n-1 \brack j-1}_r^{(l)}+(n-1)^l{n-1 \brack j}_r^{(l)}\right){j \brace k}_r^{(l)}(-1)^j\\
&=(n-1)^l(-1)^{n-1}\delta_{n-1,k}+\sum_j{n-1 \brack j-1}_r^{(l)}{j \brace k}_r^{(l)}(-1)^j,\\
\end{split}	
\end{equation*}
\noindent and from Theorem \ref{RecS2}, we get
\begin{equation*}
\resizebox*{1\linewidth}{!}{$
\begin{split}
\sum_{j}{n \brack j}_r^{(l)}{j \brace k}_r^{(l)}(-1)^j&=(n-1)^l(-1)^{n-1}\delta_{n-1,k}-(-1)^{n-1}\delta_{n-1,k-1}-(k)^l(-1)^{n-1}\delta_{n-1,k}\\
&=(-1)^n\delta_{n,k}.
\end{split}$}
\end{equation*}
\end{itemize}
For the Identity \eqref{Orth2}, we go by induction on $k$ as same as the previous proof.
\end{proof}

\section{Properties via symmetric functions}\label{Sec3}
Let $x_1, x_2, \dots,x_n$ be $n$ random variables. We denote, respectively, by \\$e_k(x_1,x_2,\dots,x_n)$ and $h_k(x_1,x_2,\dots,x_n)$ the elementary symmetric function and the complete homogeneous symmetric function of degree $k$ in $n$-variables given for $n\geq k \geq 1$, by
\begin{equation}\label{esf}
e_k(x_1,x_2,\dots,x_n)=\sum_{1 \leq i_1 < i_2 < \cdots < i_k \leq n}x_{i_1}\cdots x_{i_k} 
\end{equation}
and
\begin{equation}\label{csf}
h_k(x_1,x_2,\dots,x_n)=\sum_{1\leq i_1 \leq i_2 \leq \cdots \leq i_k\leq n}x_{i_1}\cdots x_{i_k}.
\end{equation}

In particular $e_0(x_1,x_2,\dots,x_n)=h_0(x_1,x_2,\dots,x_n)=\delta_{0,n}$.

The generating functions of the symmetric functions are given by
\begin{equation}\label{Gesf}
E(t)=\sum_{k \geq 0}e_k(x_1,x_2,\cdots,x_n)t^k=\prod_{i=1}^n(1+x_it)
\end{equation}
and 
\begin{equation}\label{Gcsf}
H(t)=\sum_{k \geq 0}h_k(x_1,x_2,\cdots,x_n)t^k=\prod_{i=1}^n(1-x_it)^{-1}.
\end{equation}

For more details about symmetric functions we refer readers to \cite{Abr,Mac,Mer} and the references therein.

Let us now give some results linked to the symmetric functions and their generating functions.
\begin{thm}
The $(l,r)$-Stirling of the first kind and the elementary symmetric function are linked as
\label{Sti1ToSym}
\begin{equation}
{n+1 \brack n+1-k}_r^{(l)}=e_k(r^l,\dots,n^l),
\end{equation}
equivalently
\begin{equation}
{n \brack k}_r^{(l)}=e_{n-k}(r^l,\dots,(n-1)^l).
\end{equation}

\end{thm}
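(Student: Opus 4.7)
The plan is to prove the equivalent form ${n \brack k}_r^{(l)} = e_{n-k}(r^l,\ldots,(n-1)^l)$ by induction on $n$, using the first recurrence of Theorem \ref{RecS1} together with the classical Pascal-type recurrence for elementary symmetric functions
\[
e_m(x_1,\ldots,x_N) = e_m(x_1,\ldots,x_{N-1}) + x_N\, e_{m-1}(x_1,\ldots,x_{N-1}),
\]
which falls out immediately from $E(t) = \prod_i (1+x_i t)$ in \eqref{Gesf}. The two recurrences are formally identical once one reads $x_i = i^l$, so the argument is essentially a transcription of one into the other.

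For the base case I would take $n = r$. The boundary condition of Theorem \ref{RecS1} gives ${r \brack k}_r^{(l)} = \delta_{k,r}$, while the right-hand side $e_{r-k}$ is evaluated on the empty sequence of variables $r^l,\ldots,(r-1)^l$. By the convention $e_0 = 1$ and $e_j = 0$ for $j \geq 1$ on an empty set (equivalently, $E(t) = 1$ for the empty product), this equals $\delta_{r-k,0} = \delta_{k,r}$, as required. The case $n < r$ is vacuous on both sides.

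For the inductive step, assume the identity at level $n-1$ for every $k$. Plugging into the Stirling recurrence and applying the induction hypothesis twice yields
\[
{n \brack k}_r^{(l)} = e_{n-k}(r^l,\ldots,(n-2)^l) + (n-1)^l\, e_{n-k-1}(r^l,\ldots,(n-2)^l),
\]
and the elementary-symmetric recurrence with new variable $x_N = (n-1)^l$ collapses this to $e_{n-k}(r^l,\ldots,(n-1)^l)$, closing the induction. The first form in the statement then follows by the index shift $n \mapsto n+1$, $k \mapsto n+1-k$.

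The only delicate point — and what I expect to be the main (mild) obstacle — is the bookkeeping at $n = r$, where one must correctly interpret the empty list of symmetric-function arguments and verify that the conventions in \eqref{esf}--\eqref{Gesf} align with the Kronecker-delta boundary given in Theorem \ref{RecS1}. Beyond this, the proof is mechanical, since the combinatorial weight $(n-1)^l$ appearing in the Stirling recurrence plays exactly the role of the newly adjoined variable in the elementary-symmetric recurrence.
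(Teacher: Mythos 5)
Your proof is correct, but it takes a genuinely different route from the paper. The paper proves the identity directly and bijectively: it fixes the set $\{x_1<\dots<x_k\}$ of non-leaders, builds the $n-k$ singleton cycles on the leaders, and inserts each $x_i$ after one of the $x_i-1$ smaller elements in each of the $l$ permutations, giving $(x_1-1)^l\cdots(x_k-1)^l$ tuples for that choice of non-leaders; summing over all choices produces the monomials of $e_k(r^l,\dots,(n-1)^l)$ term by term. Your argument instead transports the recurrence of Theorem \ref{RecS1} onto the Pascal-type recurrence $e_m(x_1,\dots,x_N)=e_m(x_1,\dots,x_{N-1})+x_N e_{m-1}(x_1,\dots,x_{N-1})$ and closes by induction on $n$. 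Both are sound: the paper's construction explains combinatorially why each monomial appears and is logically independent of Theorem \ref{RecS1}, whereas yours is shorter and mechanical but inherits its validity from that earlier recurrence. Your handling of the base case $n=r$ is the right thing to check; note only that you must use the standard convention $e_0=1$ (for any number of variables, not just the empty list), which is what the generating function \eqref{Gesf} forces --- the paper's stated convention $e_0(x_1,\dots,x_n)=\delta_{0,n}$ is evidently a misprint, since taken literally it would contradict the theorem already at $k=0$, where ${n+1\brack n+1}_r^{(l)}=1$.
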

\begin{proof}
It is clear that in each $(\sigma_1,\sigma_2,\dots,\sigma_l)$-permutation having $(n-k)$ cycles with $\{1,\dots,r\}$ lead, we have $\{1,2,\dots,r,y_{r+1},\dots,y_{n-k}\}$ lead a cycle and $\{x_1,x_2,\dots,x_k\}$ elements don't lead where $r<y_{r+1}<y_{n-k}<\dots \leq n$  and $r<x_1<x_2<\dots \leq n$.

To construct all $(\sigma_1,\sigma_2,\dots,\sigma_l)$-permutations having $(n-k)$ cycles where $\{1,\dots,r\}$ lead, we proceed as follows
\begin{itemize}

\item Construct $(n-k)$ cycles having only one element from $\{1,2,\dots,r,$ $y_{r+1},\dots,{y_{n-k}}\}$, i. e.
$$\sigma=(1)(2)\dots(r)(y_{r+1})\dots(y_{n-k}),$$
\item Insert $x_1$ after an element of cycles smaller than $x_1$, we have $(x_1-1)$ ways of inserting $x_1$. Then Insert $x_2$ after an element of cycles smaller than $x_2$, we have $(x_2-1)$ choices, and so on. We have $(x_1-1)(x_2-1)\cdots(x_k-1)$ ways to construct a permutation. 
\item Repeat the process with each permutation $\sigma \in \{\sigma_1,\dots,\sigma_l\}$, so we have $(x_1-1)^l(x_2-1)^l\cdots(x_k-1)^l$ ways of construction. 
\item Summing over all possible set of numbers $\{x_1,x_2,\dots, x_k\}$, hence the total number of ways to construct  $(\sigma_1,\sigma_2,\dots,\sigma_l)$-permutations having $(n-k)$ cycles with $\{1,\dots,r\}$ lead is 

\begin{equation*}
\begin{split}
{n \brack n-k}_r^{(l)}&=\sum_{r<x_1<x_2<\cdots\leq n}(x_1-1)^l(x_2-1)^l\cdots(x_k-1)^l\\&=\sum_{r \leq x_1<x_2<\cdots< n}x_1^l x_2^l\cdots x_k^l\\&=e_k(r^l,\dots,(n-1)^l).
\end{split}
\end{equation*}
\end{itemize}
\end{proof}
\begin{thm} \label{Sti2ToSym} The $(l,r)$-Stirling of the first kind and the complete homogeneous symmetric function are linked as
\begin{equation}
{n +k \brace n}_r^{(l)}=h_k(r^l,\dots,n^l),
\end{equation}
\end{thm}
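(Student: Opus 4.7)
The plan is to adapt the combinatorial construction used in Theorem \ref{Sti1ToSym} to the partition setting. Given an $l$-tuple $(\pi_1,\dots,\pi_l)$ of partitions of $[n+k]$ into $n$ blocks with $bl(\pi_1)=\cdots=bl(\pi_l)$ and with $1,\dots,r$ leading, I would scan the elements $r+1,r+2,\dots,n+k$ in increasing order. Because the block-leader sets agree across the tuple, each scanned element is either a new leader in every $\pi_j$ or a non-leader in every $\pi_j$; in the latter case, since we scan in increasing order, every already-opened block has leader smaller than the current element and is therefore a legal target in every $\pi_j$.

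Since the $l$-tuple ends with $n$ blocks, exactly $n-r$ of the $n+k-r$ scanned elements become leaders and $k$ become non-leaders. A leader contributes a factor $1$ and increases the current block-count by one; a non-leader inserted when the block-count is $b$ contributes a factor $b^l$, because the $l$ partitions choose their target block independently among the $b$ available blocks. Writing $b_1\leq b_2\leq\cdots\leq b_k$ for the block-counts at the successive non-leader insertions, the total is
\[
{n+k \brace n}_r^{(l)} \;=\; \sum\; b_1^l\, b_2^l \cdots b_k^l,
\]
summed over the tuples $(b_1,\dots,b_k)$ that arise.

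It then remains to describe the range of $(b_1,\dots,b_k)$. The block-count is weakly increasing, starts at $r$ and ends at $n$, so $r\leq b_1\leq\cdots\leq b_k\leq n$; conversely, any such sequence arises from a unique interleaving of the $k$ non-leaders with the $n-r$ post-$r$ leaders, the number of leaders preceding the $i$-th non-leader being $b_i-r$. Hence
\[
{n+k \brace n}_r^{(l)} \;=\; \sum_{r\leq b_1\leq\cdots\leq b_k\leq n} b_1^l\, b_2^l \cdots b_k^l \;=\; h_k\bigl(r^l,(r+1)^l,\dots,n^l\bigr),
\]
by \eqref{csf}.

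The main (small) obstacle is presenting the interleaving bijection cleanly; the rest is a direct transcription of the Theorem \ref{Sti1ToSym} argument, with the essential change that a partition non-leader may be inserted into \emph{any} block with smaller leader (contributing $b^l$), whereas a permutation non-leader must be inserted after a specific \emph{element} smaller than itself (contributing $(x-1)^l$). As a backup, one can induct on $k$ using the first recurrence of Theorem \ref{RecS2} together with the classical identity $h_k(x_1,\dots,x_n)=h_k(x_1,\dots,x_{n-1})+x_n\, h_{k-1}(x_1,\dots,x_n)$, whose two terms correspond exactly to ${n+k-1 \brace n-1}_r^{(l)}$ and $n^l\,{n+k-1 \brace n}_r^{(l)}$ respectively.
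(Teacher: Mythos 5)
Your argument is correct and is essentially the paper's own proof: your block-count $b_i$ at the moment the $i$-th non-leader is inserted is exactly the paper's $x_i$, the number of leaders smaller than $y_i$, and both proofs reduce the count to $\sum_{r\leq x_1\leq\cdots\leq x_k\leq n}x_1^l\cdots x_k^l=h_k(r^l,\dots,n^l)$. The incremental scan (and the backup induction via the recurrence for $h_k$) is just a repackaging of the paper's two-stage construction, so no substantive difference.
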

\begin{proof}
Let us count the number of $(\pi_1,\pi_2,\dots,\pi_l)$-partitions of $[n+k]$ into $n$ blocks with $\{1,2,\dots,r\}$ are leaders. First, we denote, $\{y_1,y_2,\dots,y_k\}$ the elements that are not leaders where $y_1<y_2<\dots<y_k$. Let $x_i$ be  the number of leaders smaller than $y_i$, $i \in \{1,\dots,k\}$, it is clear that $r \leq i_1 \leq i_2 \leq \cdots \leq i_k \leq n$. 

The construction of such partition goes as follows
\begin{itemize}
\item Construct a partition of $n$ blocks with $[n+k]$\textbackslash$\{y_1,y_2,\dots,y_k\}$ where ${1,2,\dots,r}$ are leaders, i. e. 
$$\{1\}\{2\}\dots\{r\}\{z_{r+1}\}\dots\{z_{n}\}.$$
\item Insert the $\{y_1,y_2,\dots,y_k\}$ elements to the $n$ blocks. It is clear that $y_i$ can belong only to a block having a leader smaller than $y_i$, we have $x_1\cdot x_2 \cdots x_k $ ways to do.
\item Repeat the process with each partition $\pi \in \{\pi_1,\dots,\pi_l\}$, so we have $(x_1)^l(x_2)^l\dots(x_k)^l$ ways of construction. 
\item Summing over all possible set of numbers $\{x_1,x_2,\dots, x_k\}$, hence the total number of ways to construct  $(\pi_1,\pi_2,\dots,\pi_l)$-partitions of $[n+k]$ having $n$ blocks with $\{1,\dots,r\}$ lead is 
\begin{equation*}
\begin{split}
{n+k \brace n}_r^{(l)}&=\sum_{r \leq x_1\leq x_2\leq \cdots\leq n}x_1^lx_2^l\cdots x_k^l\\&=h_k(r^l,\dots,n^l).
\end{split}
\end{equation*}
\end{itemize}
\end{proof}
\section{Generating functions}
Now, we can use the symmetric functions to construct the generating functions for the $(l,r)$-Stirling of both kinds. 
\begin{thm} The generating function for the $(l,r)$-Stirling numbers of the first kind is
\begin{equation}
\sum_k  {n \brack k}_r^{(l)}z^k=z^r \prod _{i=r}^{n-1} \left(z+i^l\right)=z^r  \left(z+r^l\right)\left(z+(r+1)^l\right)\cdots \left(z+(n-1)^l\right),
\end{equation}
\end{thm}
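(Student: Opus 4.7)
The plan is to prove the identity by induction on $n$ using the first recurrence from Theorem \ref{RecS1}. Set $P_n(z) := \sum_k {n \brack k}_r^{(l)} z^k$. The base case $n = r$ is immediate from the boundary condition ${r \brack k}_r^{(l)} = \delta_{k,r}$, which gives $P_r(z) = z^r$, in agreement with the empty product on the right-hand side.

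For the inductive step, I would apply the recurrence ${n \brack k}_r^{(l)} = {n-1 \brack k-1}_r^{(l)} + (n-1)^l {n-1 \brack k}_r^{(l)}$ termwise. Reindexing the first resulting sum via $k \mapsto k-1$ and factoring produces
\[
P_n(z) \;=\; z\, P_{n-1}(z) + (n-1)^l P_{n-1}(z) \;=\; \bigl(z + (n-1)^l\bigr) P_{n-1}(z).
\]
Combined with the inductive hypothesis $P_{n-1}(z) = z^r \prod_{i=r}^{n-2}(z+i^l)$, this immediately delivers the claim.

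A parallel route is to invoke Theorem \ref{Sti1ToSym}, which identifies ${n \brack k}_r^{(l)}$ with $e_{n-k}(r^l,\dots,(n-1)^l)$, and then to use the elementary generating series \eqref{Gesf} in the form $E(t) = \prod_{i=r}^{n-1}(1 + i^l t)$. Substituting $j = n-k$, setting $t = 1/z$, and multiplying by $z^n$ reassembles the product form after a routine reindexing. Either approach is transparent; I expect no serious obstacle. The only steps that require care are verifying the base case $n = r$ (where the product is empty and equals $1$) and correctly handling the index shift when separating the two pieces of the recurrence.
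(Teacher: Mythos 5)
Your proposal is correct. Your primary route --- induction on $n$ via the recurrence of Theorem \ref{RecS1}, giving $P_n(z) = \bigl(z+(n-1)^l\bigr)P_{n-1}(z)$ with base case $P_r(z)=z^r$ --- is sound but is not the argument the paper uses: the paper proves this theorem exactly by your ``parallel route,'' namely substituting the identification ${n \brack k}_r^{(l)} = e_{n-k}\bigl(r^l,\dots,(n-1)^l\bigr)$ from Theorem \ref{Sti1ToSym} into the elementary symmetric generating series \eqref{Gesf}, writing $\sum_k {n \brack k}_r^{(l)} z^k = z^n\sum_k e_k\bigl(r^l,\dots,(n-1)^l\bigr)z^{-k} = z^n\prod_{i=r}^{n-1}\bigl(1+i^l/z\bigr) = z^r\prod_{i=r}^{n-1}(z+i^l)$. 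The inductive argument is more elementary and self-contained, needing only the recurrence and the boundary conditions (note the recurrence is stated for $n>r$, which is exactly what the inductive step requires); the paper's route is a one-line consequence of the combinatorial symmetric-function theorem already proved, and it keeps the generating function conceptually tied to $e_k$, which the paper then exploits again in the convolution theorem. Either proof is complete; the only care needed in yours is the empty-product convention at $n=r$, which you handle correctly.
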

\begin{proof}
From Theorem \ref{Sti1ToSym} and the generating function \eqref{Gesf} we obtain
\begin{equation}
\begin{split}
\sum_k  {n \brack k}_r^{(l)}z^k&=z^n\sum_{k}e_{k}(r^l,\dots,(n-1)^l)(z^{-1})^k\\
&=z^n \prod_{i=r}^{n-1}\left(1+\frac{i^l}{z}\right)\\
&=z^r\prod_{i=r}^{n-1}(z+i^l).
\end{split}
\end{equation}
\end{proof}
\begin{thm}The generating function for the $(l,r)$-Stirling numbers of the second kind is
\begin{equation}
\sum_{n = k}  {n \brace k}_r^{(l)}z^n=z^k\left(\prod_{i=r}^k(1-z i^l)\right)^{-1}=\frac{z^k}{(1-z r^l)(1-z (r+1)^l)(1-z k^l)}.
\end{equation}
\end{thm}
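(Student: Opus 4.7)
The plan is to deduce the formula directly from Theorem \ref{Sti2ToSym} together with the generating function (\ref{Gcsf}) for the complete homogeneous symmetric function, with essentially no extra combinatorial work needed. First I would rewrite Theorem \ref{Sti2ToSym} in the form
\[
{n \brace k}_r^{(l)} = h_{n-k}\!\left(r^l,(r+1)^l,\dots,k^l\right),
\]
which is obtained by substituting $(n+k,n)\mapsto(n,k)$ in the statement of that theorem. This makes the number of variables in the symmetric function (namely $k-r+1$) match the product range $r\le i\le k$ that eventually appears.

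Next I would perform the index shift $n=k+m$, $m\ge 0$, pulling out a factor of $z^k$:
\[
\sum_{n\ge k}{n \brace k}_r^{(l)}z^n = z^k\sum_{m\ge 0} h_m\!\left(r^l,(r+1)^l,\dots,k^l\right) z^m.
\]
The remaining series is exactly $H(z)$ from (\ref{Gcsf}) specialized to the variables $x_i=i^l$ for $r\le i\le k$, which equals $\prod_{i=r}^{k}(1-z\,i^l)^{-1}$. Multiplying by $z^k$ yields the claimed product form.

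There is no genuine obstacle here; the proof is a two-line consequence of a previously established identity and a standard generating-function formula. The only point requiring care is the re-indexing in the first step, since Theorem \ref{Sti2ToSym} is stated with the exponent on top as $n+k$ and the lower index as $n$, whereas the current statement fixes $k$ on the bottom and sums over the top index. Once that translation is made correctly, the displayed product follows immediately from (\ref{Gcsf}).
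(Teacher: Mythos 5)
Your proof is correct and is essentially identical to the paper's: both re-index the sum as $n=k+j$, recognize the coefficients as $h_j(r^l,\dots,k^l)$ via Theorem \ref{Sti2ToSym}, and apply the generating function \eqref{Gcsf} to obtain the product.
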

\begin{proof}
From Theorem \ref{Sti2ToSym} and the generating function of homogeneous symmetric function \eqref{Gcsf}, we obtain
\begin{equation*}
\resizebox*{1\textwidth}{!}{$
\begin{split}
\sum_{n \geq  k} {n \brace k}_r^{(l)}z^n&=\sum_{j \geq 0} {k+j \brace k}z^{k+j}=z^k\sum_{j\geq 0}h_j(r^l,\dots,k^l)z^{j}=z^k\left(\prod_{i=r}^k(1-z i^l)\right)^{-1}.
\end{split}$}
\end{equation*}
\end{proof}

In the following theorem we investigate the symmetric functions to obtain a convolution formula for the $(l,r)$-Stirling numbers of both kinds.
\begin{thm}For all positive integers $l$, $n$, $k$ and $r$ with $(n \geq k \geq r)$, we have
\begin{equation}
 \sum_{ \begin{array}{c} i_0+2i_1\cdots+2^li_{l}=k \\
i_0,\cdots,i_{l} \geq 0
\end{array}}{n+i_l \brace n}_r^{(2^l)}\prod_{s=0}^{l-1}{n+1 \brack n+1-i_s}_r^{(2^s)}={n+k \brace n}_r.
\end{equation}
\end{thm}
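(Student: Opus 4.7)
The plan is to recognize both sides as coefficients in a symmetric-function identity, using Theorems \ref{Sti1ToSym} and \ref{Sti2ToSym} to translate every Stirling entry into an $e_k$ or $h_k$ evaluated on a power-sequence. Specifically, for each $s$ with $0 \leq s \leq l-1$,
\[
{n+1 \brack n+1-i_s}_r^{(2^s)} = e_{i_s}\!\left(r^{2^s},(r+1)^{2^s},\dots,n^{2^s}\right),
\]
and
\[
{n+i_l \brace n}_r^{(2^l)} = h_{i_l}\!\left(r^{2^l},(r+1)^{2^l},\dots,n^{2^l}\right),
\]
while the right-hand side is ${n+k \brace n}_r = h_k(r,r+1,\dots,n)$.

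Next, I would introduce a formal variable $t$ and sum the claimed identity against $t^k$ for $k\geq 0$. Because $i_s$ contributes the weight $2^s$ to $k$, the generating function of the left-hand side factors as
\[
\prod_{s=0}^{l-1} E_s(t^{2^s})\cdot H_l(t^{2^l}),
\]
where, using \eqref{Gesf} and \eqref{Gcsf},
\[
E_s(u) = \prod_{i=r}^{n}\bigl(1+i^{2^s}u\bigr), \qquad H_l(u) = \prod_{i=r}^{n}\bigl(1-i^{2^l}u\bigr)^{-1}.
\]
Grouping the factors by the index $i\in\{r,\dots,n\}$ reduces the verification to showing, for each $i$ and with $y := it$,
\[
\prod_{s=0}^{l-1}\bigl(1+y^{2^s}\bigr)\cdot \frac{1}{1-y^{2^l}} \;=\; \frac{1}{1-y}.
\]

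This last identity is the standard telescoping
\[
\prod_{s=0}^{l-1}\bigl(1+y^{2^s}\bigr)=\frac{1-y^{2^l}}{1-y},
\]
after which the $1-y^{2^l}$ factors cancel. Consequently the generating function of the left-hand side equals $\prod_{i=r}^{n}(1-it)^{-1}$, whose coefficient of $t^k$ is precisely $h_k(r,r+1,\dots,n) = {n+k \brace n}_r$, and matching coefficients of $t^k$ finishes the proof.

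The only delicate point is the bookkeeping with the weighted sum $i_0+2i_1+\cdots+2^l i_l=k$: one must remember to substitute $t^{2^s}$ (not $t$) into the $s$-th generating function so the exponent of $t$ tracks $k$ correctly. Once that substitution is in place, the proof reduces to the two-line telescoping identity above, which makes the combinatorial convolution transparent.
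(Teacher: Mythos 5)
Your proposal is correct and follows essentially the same route as the paper: both translate the Stirling entries into $e_{i_s}$ and $h_{i_l}$ via Theorems \ref{Sti1ToSym} and \ref{Sti2ToSym}, expand the generating function $\prod_{i}(1-it)^{-1}$ using the telescoping identity $\prod_{s=0}^{l-1}\bigl(1+y^{2^s}\bigr)=\frac{1-y^{2^l}}{1-y}$ (the paper inserts the factors $\frac{1+x_i^{2^s}z^{2^s}}{1+x_i^{2^s}z^{2^s}}$, which is the same identity read in reverse), and compare coefficients of $t^k$ under the weighting $i_0+2i_1+\cdots+2^l i_l=k$. The only difference is cosmetic: you proceed from the left-hand side toward $h_k(r,\dots,n)$, while the paper starts from the $h_k$ generating function and factors it.
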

\begin{proof}Let us consider the generating function of the complete homogeneous symmetric function \eqref{Gcsf}. From that we have
\begin{equation*}
\resizebox*{1\textwidth}{!}{$
\begin{split}
\sum_{k \geq 0}h_k(x_1,\dots,x_n)z^k&=\prod_{i=1}^n\frac{1}{(1-x_iz)}\\
&=\prod_{i=1}^n\frac{1}{(1-x_iz)}\prod_{s=0}^{l-1}\left(\frac{1+x_i^{2^s}z^{2^s}}{1+x_i^{2^s}z^{2^s}}\right)\\
&=\prod_{i=1}^n\frac{1}{(1-x_i^{2^l}z^{2^l})}\prod_{s=0}^{l-1}\left(1+x_i^{2^s}z^{2^s}\right)\\
&=\sum_{k \geq 0}h_k(x_1^{2^l},\dots,x_n^{2^l})z^{2^lk}\prod_{s=0}^{l-1}\sum_{k \geq 0}e_k(x_1^{2^s},\dots,x_n^{2^s})z^{2^sk}\\
&=\sum_{k \geq 0}h_k(x_1^{2^l},\dots,x_n^{2^l})z^{2^lk}\sum_{k \geq 0}e_k(x_1,\dots,x_n)z^{k}\sum_{k \geq 0}e_k(x_1^{2},\dots,x_n^{2})z^{2k}\cdots \sum_{k \geq 0}e_k(x_1^{2^{l-1}},\dots,x_n^{2^{l-1}})z^{2^{l-1}k}\\
&=\sum_{k \geq 0}\left( \sum_{\begin{array}{c}
i_0+2i_1+\dots+2^li_l=k;\\
i_0,\dots,i_l\geq 0.
\end{array}}h_{i_l}(x_1^{2^l},\dots,x_n^{2^l})\prod_{s=0}^{l-1}e_{i_s}(x_1^{2^s},\dots,x_n^{2^s})\right)z^k.
\end{split}
$}
\end{equation*}
From Theorem \ref{Sti1ToSym} and Theorem \ref{Sti2ToSym} and by comparing the coefficients of $z^k$ of the two sides the result holds true.
\end{proof}
The simplest case of the previous theorem is the corollary bellow which generalize the result of Broder \cite{Bro}.
\begin{cor}
For $l=1$, we have 
\begin{equation}
 \sum_{i=0}^{\lfloor k/2\rfloor}{n+i \brace n}_r^{(2)}{n+1 \brack n+1+2i-k}_r={n+k \brace n}_r.
\end{equation}
\end{cor}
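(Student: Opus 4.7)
The plan is to derive this corollary as a direct specialization of the preceding convolution theorem. I would set $l = 1$ in the general identity and simplify each factor in turn. The summation constraint $i_0 + 2 i_1 + \cdots + 2^l i_l = k$ collapses to $i_0 + 2 i_1 = k$ with $i_0, i_1 \geq 0$, so $i_1$ ranges over $\{0, 1, \dots, \lfloor k/2 \rfloor\}$ and $i_0$ is forced to equal $k - 2 i_1$.

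Next I would simplify the product and the leading factor. The product $\prod_{s=0}^{l-1}$ becomes a single term at $s = 0$, contributing
\[
{n+1 \brack n+1-i_0}_r^{(2^0)} = {n+1 \brack n+1 - (k - 2 i_1)}_r^{(1)} = {n+1 \brack n+1 + 2 i_1 - k}_r,
\]
where I use that the $(l,r)$-Stirling numbers reduce to Broder's $r$-Stirling numbers when $l=1$. Similarly, the leading factor is ${n+i_l \brace n}_r^{(2^l)} = {n+i_1 \brace n}_r^{(2)}$, and the right-hand side ${n+k \brace n}_r$ is already the classical $r$-Stirling number. Relabeling the summation variable $i_1$ as $i$ yields the stated identity.

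No real obstacle should arise; the hypothesis $n \geq k \geq r$ is inherited from the ambient theorem. The only bookkeeping to check carefully is the index shift $n+1 - i_0 \mapsto n+1 + 2i - k$ and that the admissible range $i \in \{0, \dots, \lfloor k/2 \rfloor\}$ exactly encodes the non-negativity constraints on $(i_0, i_1)$. I would also briefly note that the specialization recovers Broder's convolution formula from \cite{Bro}, which motivates calling this the ``simplest case'' of the convolution identity.
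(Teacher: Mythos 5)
Your proposal is correct and matches the paper exactly: the paper offers no separate proof, presenting the corollary as the immediate $l=1$ specialization of the convolution theorem, which is precisely your argument (constraint $i_0+2i_1=k$, product reduced to the single $s=0$ factor, relabel $i_1$ as $i$). The index bookkeeping $n+1-i_0=n+1+2i-k$ and the range $0\le i\le\lfloor k/2\rfloor$ check out, so nothing further is needed.
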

\section{The $(l,r)$-Stirling numbers, the sum powers and Bernoulli polynomials}
Recall, for every integer $n \geq 0$, the Bernoulli polynomials, denoted $B_n(x)$, are defined by
\begin{equation}\label{Bern}
\sum_{n=0}^{\infty}B_n(x)\frac{t^n}{n}=\frac{te^{xt}}{e^t-1}.
\end{equation}
The sum of the powers of natural numbers is closely related to the Bernoulli polynomials $B_n(x)$. Jacobi \cite{Jac,Sri} gives the following identity using the sum of powers and Bernoulli polynomials
\begin{equation}\label{JacId}
\sum_{j=1}^{n}j^m=\frac{B_{m+1}(n+1)-B_{m+1}(0)}{m+1}.
\end{equation}

The following theorem gives the relation between $(l,r)$-Stirling of both kinds and Bernoulli polynomials.  
\begin{thm}For all positive integers $n$, $k$ and $l$, we have
\begin{equation}
\sum_{j=0}^k(-1)^j(j+1){n+1\brack n-j}^{(l)}{n+k-j\brace n}^{(l)}=\frac{B_{lk+l+1}(n+1)-B_{lk+l+1}(0)}{lk+l+1},
\end{equation}
\end{thm}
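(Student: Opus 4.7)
The plan is to translate both Stirling factors into symmetric functions in the variables $x_i = i^l$ ($i=1,\dots,n$), collapse the resulting alternating sum to a single power sum via a standard generating-function identity, and then invoke Jacobi's formula (\ref{JacId}) to bring in the Bernoulli polynomials. Since the statement drops the $r$-subscript, I read the Stirling numbers as the case $r=1$, which matches the range $\{1^l, 2^l, \dots, n^l\}$ appearing below.

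First I would apply Theorem \ref{Sti1ToSym} with $k$ replaced by $j+1$ and Theorem \ref{Sti2ToSym} with $k$ replaced by $k-j$, giving
\[
{n+1 \brack n-j}^{(l)} = e_{j+1}(x_1,\dots,x_n), \qquad {n+k-j \brace n}^{(l)} = h_{k-j}(x_1,\dots,x_n).
\]
Reindexing by $m = j+1$, the left-hand side of the claim becomes
\[
S \;:=\; \sum_{m=1}^{k+1} (-1)^{m-1}\, m\, e_m(x_1,\dots,x_n)\, h_{k+1-m}(x_1,\dots,x_n).
\]

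The heart of the argument is the symmetric-function identity
\[
\sum_{m=0}^{N} (-1)^{m-1}\, m\, e_m\, h_{N-m} \;=\; p_N \qquad (N \geq 1),
\]
where $p_N = x_1^N + \cdots + x_n^N$ is the $N$-th power sum. I would prove it by generating functions. Noting that $\sum_{m\ge 0}(-1)^{m-1}\, m\, e_m\, t^m = -t\,\tfrac{d}{dt}E(-t)$ and using the dual relation $E(-t)H(t) = 1$ coming from \eqref{Gesf}--\eqref{Gcsf}, one computes
\[
\Bigl(-t\, \tfrac{d}{dt}E(-t)\Bigr)\, H(t) \;=\; \tfrac{t\, H'(t)}{H(t)} \;=\; \sum_{N \geq 1} p_N\, t^N,
\]
where the final equality is the classical logarithmic-derivative expansion $H'(t)/H(t) = \sum_{i}\frac{x_i}{1-x_i t}=\sum_{N\ge 1} p_N t^{N-1}$. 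Extracting the coefficient of $t^{k+1}$ gives $S = p_{k+1}(x_1,\dots,x_n)$.

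Finally, with $x_i = i^l$ one has $p_{k+1} = \sum_{i=1}^{n} i^{l(k+1)} = \sum_{i=1}^{n} i^{lk+l}$, and Jacobi's identity \eqref{JacId} applied with exponent $m = lk+l$ gives precisely $\frac{B_{lk+l+1}(n+1) - B_{lk+l+1}(0)}{lk+l+1}$. The main obstacle is the middle step: setting up the generating-function manipulation to get both the factor $m$ and the alternating sign correct; the surrounding work is bookkeeping with indices and direct substitution.
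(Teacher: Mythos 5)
Your proposal is correct and follows essentially the same route as the paper: translating both factors into $e_{j+1}$ and $h_{k-j}$ in the variables $i^l$, collapsing the alternating convolution to the power sum $p_{k+1}$ via the logarithmic-derivative identity $H'(t)/H(t)=H(t)E'(-t)$ (which you phrase as extracting the coefficient of $t^{k+1}$ from $\bigl(-t\,\tfrac{d}{dt}E(-t)\bigr)H(t)=\sum_{N\ge 1}p_N t^N$), and then applying Jacobi's formula \eqref{JacId} with exponent $lk+l$. The only difference is presentational: you isolate the intermediate step as the standard identity $\sum_m(-1)^{m-1}m\,e_m h_{N-m}=p_N$, whereas the paper carries out the same Cauchy-product computation inline.
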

\begin{proof}
In the first hand we have Jacobi's Identity \eqref{JacId}
\begin{equation}\label{Faul}
\sum_{j=1}^{n}(j^l)^k=\frac{B_{lk+1}(n+1)-B_{lk+1}(0)}{lk+1},
\end{equation}
in the second hand, we have
$$H(t)=\sum_{k\geq 0}h_k(1^l,2^l,\dots ,n^l)t^k=\prod_{j=1}^n\frac{1}{(1-j^st)}$$
and
$$E(t)=\sum_{k\geq 0}e_k(1^l,2^l,\dots ,n^l)t^k=\prod_{j=1}^n(1+j^st),$$
from the obvious observation that $H(t)=1/E(-t)$, we obtain
\begin{equation}\label{fside}
\frac{d}{dt}\ln{H(t)}=\frac{H'(t)}{H(t)}=H(t)E'(-t)
\end{equation}
but
\begin{equation}\label{sside}
\frac{d}{dt}\ln{H(t)}=\sum_{j=1}^n \frac{j^l}{(1-j^lt)}=\sum_{k \geq 0}\sum_{j=1}^n j^{s(k+1)}t^k.
\end{equation}
Then from equations \eqref{fside} and \eqref{sside}, we get
\begin{equation}
\begin{split}
\sum_{k \geq 0}\sum_{j=1}^n j^{s(k+1)}t^k&=H(t)E'(-t)\\
&=\left(\sum_{k\geq 0} h_k(1^l,\dots,n^l)t^{k}\right)\left(\sum_{k\geq 1}k (-1)^{k-1} e_k(1^l,\dots,n^l)t^{k-1}\right).\\
\end{split}
\end{equation}
Cauchy product and equating coefficient of $t^k$ gives
\begin{equation}
\begin{split}
\sum_{j=1}^n j^{s(k+1)}&=\sum_{j\geq 1}^n (j+1) (-1)^{j}e_{j+1}(1^l,\dots,n^l)h_{k-j}(1^l,\dots,n^l),
\end{split}
\end{equation}
replacing symmetric functions by stirling numbers from Theorem \ref{Sti1ToSym} and Theorem \ref{Sti2ToSym}, and comparing with Equation \eqref{Faul} we get the result.

\end{proof}
\section{Multiple zeta function and $(l,r)$-Stirling numbers of the first kind}\label{Sec4}
\noindent

For any ordered sequence of positive integers $i_1,i_2,\dots,i_k$, the \textit{multiple zeta function} is introduced by Hoffman \cite{Hof}
and independently Zagier \cite{Zag} by the following infinite sums
\begin{equation}\label{MultZ}
\zeta(i_1,i_2,\dots,i_k)=\sum_{0< j_1 < j_2 < \cdots <j_k}\frac{1}{j_1^{i_1}j_2^{i_2}\cdots j_k^{i_k}}.
\end{equation}

Recently, the multiple zeta function has been studied quite intensively by many authors in various fields of mathematics and physics (see \cite{Ara,Bra,Bor,Hof,Zag,Zud}).
Here we give a relation between $(l,r)$-Stirling numbers of the first kind and the multiple zeta function.

\begin{thm}\label{StirNest}For all positive integers $n$, $k$, $l$ and $r$ with $(n \geq k \geq r)$, we have
\begin{equation}
\begin{split}
{n+1  \brack k+1}_r^{(l)}&=\left(\frac{n!}{(r-1)!}\right)^l\sum_{j_{k}=k}^{n}\sum_{j_{k-1}=k-1}^{j_{k}-1}\cdots \sum_{j_{r}=r}^{j_{(r+1)}-1}\frac{1}{\left(j_{r}j_2\cdots j_{k} \right)^l}\\&=\left(\frac{n!}{(r-1)!}\right)^l\sum_{r-1< j_1 < j_2 < \cdots <j_k\leq n}\frac{1}{(j_1j_2\cdots j_k)^{l}}.
\end{split}
\end{equation}
\end{thm}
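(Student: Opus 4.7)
My plan is to reduce the statement to a standard complementation identity for elementary symmetric polynomials, using the combinatorial interpretation already established in Theorem \ref{Sti1ToSym}.

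First, I would rewrite the left-hand side using Theorem \ref{Sti1ToSym}. Setting the pair $(n,k)$ in that theorem to $(n+1, k+1)$, one reads off
\begin{equation*}
{n+1 \brack k+1}_r^{(l)} = e_{n-k}(r^l, (r+1)^l, \dots, n^l).
\end{equation*}
So the task reduces to showing that this elementary symmetric polynomial in the variables $r^l, (r+1)^l, \dots, n^l$ equals $\left(n!/(r-1)!\right)^l$ times the nested sum on the right.

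Next I would invoke the classical complementation identity for symmetric functions: for any nonzero $x_1, \dots, x_m$,
\begin{equation*}
e_p(x_1, \dots, x_m) = x_1 x_2 \cdots x_m \cdot e_{m-p}\!\left(\tfrac{1}{x_1}, \dots, \tfrac{1}{x_m}\right).
\end{equation*}
Here I take $x_i = i^l$ for $i = r, r+1, \dots, n$, so $m = n - r + 1$ and $p = n-k$, giving $m-p = k-r+1$. The product of the variables is
\begin{equation*}
\prod_{i=r}^{n} i^l = \left(\frac{n!}{(r-1)!}\right)^l,
\end{equation*}
which is exactly the prefactor appearing in the theorem. Applying the identity yields
\begin{equation*}
e_{n-k}(r^l, \dots, n^l) = \left(\frac{n!}{(r-1)!}\right)^l \, e_{k-r+1}\!\left(\tfrac{1}{r^l}, \tfrac{1}{(r+1)^l}, \dots, \tfrac{1}{n^l}\right).
\end{equation*}

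Finally, I would expand $e_{k-r+1}$ using definition \eqref{esf}: this produces a sum over all strictly increasing $(k-r+1)$-tuples of integers drawn from $\{r, r+1, \dots, n\}$. Relabelling these indices as $j_r < j_{r+1} < \cdots < j_k$ (so that the number of summation variables matches the $k - r + 1$ slots from $r$ to $k$) gives
\begin{equation*}
e_{k-r+1}\!\left(\tfrac{1}{r^l}, \dots, \tfrac{1}{n^l}\right) = \sum_{r \leq j_r < j_{r+1} < \cdots < j_k \leq n} \frac{1}{(j_r j_{r+1} \cdots j_k)^l},
\end{equation*}
which is precisely the compact form on the second line of the statement, and unfolding into a nested sum $\sum_{j_k=k}^{n} \sum_{j_{k-1}=k-1}^{j_k - 1} \cdots \sum_{j_r = r}^{j_{r+1}-1}$ reproduces the first line.

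The proof is essentially bookkeeping once Theorem \ref{Sti1ToSym} is in hand; the only delicate point is the index matching, namely checking that the complement of an $(n-k)$-subset of $\{r, \dots, n\}$ has $k-r+1$ elements and that the lower bound $r-1 < j_1$ in the compact form corresponds to starting the innermost nested sum at $j_r = r$. This is where a careful verification of the ranges is needed, but no deeper idea is required.
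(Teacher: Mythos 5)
Your proof is correct, and it takes a genuinely different route from the paper's. The paper proves the theorem by iterating the recurrence of Theorem \ref{RecS1} to get ${n \brack k}_r^{(l)}=\left((n-1)!\right)^l\sum_{j=k-1}^{n-1}\frac{1}{(j!)^l}{j \brack k-1}_r^{(l)}$, then builds the nested sum step by step from the base case $k=r$ (using \eqref{specS1}) through $k=r+1$, $k=r+2$, and so on. You instead start from Theorem \ref{Sti1ToSym}, write ${n+1\brack k+1}_r^{(l)}=e_{n-k}(r^l,\dots,n^l)$, and apply the complementation identity $e_p(x_1,\dots,x_m)=\bigl(\prod_i x_i\bigr)\,e_{m-p}\!\left(1/x_1,\dots,1/x_m\right)$; the index bookkeeping ($m=n-r+1$, $p=n-k$, $m-p=k-r+1$, and $\prod_{i=r}^n i^l=(n!/(r-1)!)^l$) is all verified correctly, and using Theorem \ref{Sti1ToSym} here is not circular since it is proved earlier by a direct combinatorial construction. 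What your route buys is brevity and transparency: the prefactor $\left(n!/(r-1)!\right)^l$ appears naturally as the product of the variables, and the nested sum is just the expansion of $e_{k-r+1}$ over strictly increasing index tuples, exactly matching the first displayed line of the statement; the paper's route is more hands-on and stays entirely within the recurrence, at the cost of an informal ``iterate and observe the pattern'' step. One minor point worth flagging: your relabelled sum has $k-r+1$ variables $j_r<\cdots<j_k$, which agrees with the nested sums in the first line, whereas the paper's second line is written with $k$ variables $j_1<\cdots<j_k$ and $r-1<j_1$; for $r>1$ that line is only a loose relabelling of the first (they literally coincide only when $r=1$), so the discrepancy you glossed over lies in the paper's notation, not in your argument.
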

\begin{proof}
Since $ {n \brack k}_r^{(l)}={n-1 \brack k-1}_r^{(l)}+(n-1)^l{n-1 \brack k}_r^{(l)}$ from Theorem \ref{RecS1}. If we proceed iteratively, we obtain that
\begin{equation}\label{it}
{n \brack k}_r^{(l)}=\left((n-1)!\right)^l\sum_{j=k-1}^{n-1}\frac{1}{(j!)^l}{j \brack k-1}_r^{(l)}.
\end{equation}
For $k={r}$, from \eqref{specS1} and \eqref{it} we obtain
\begin{equation}\label{itr}
{n \brack r}_r^{(l)}=(r^{\overline{n-r}})^l=\left(\frac{(n-1)!}{(r-1)!}\right)^l.
\end{equation}
For $k={r+1}$, from \eqref{it} and \eqref{itr} we obtain
\begin{equation}\label{itr1}
\begin{split}
{n \brack r+1}_r^{(l)}&=\left((n-1)!\right)^l\sum_{j=r}^{n-1}\frac{1}{(j!)^l}{j \brack r}_r^{(l)}\\
&=\left(\frac{(n-1)!}{(r-1)!}\right)^l\sum_{j=r}^{n-1}\left(\frac{(j-1)!}{j!}\right)^l\\
&=\left(\frac{(n-1)!}{(r-1)!}\right)^l\sum_{j=r}^{n-1}\frac{1}{j^l}.
\end{split}
\end{equation}
For $k=r+2$, from \eqref{itr} and \eqref{itr1} we obtain
\begin{equation}
{n \brack r+2}_r^{(l)}=\left(\frac{(n-1)!}{(r-1)!}\right)^l\sum_{j=r+1}^{n-1}\sum_{i=r}^{j-1}\frac{1}{(ij)^l}, 
\end{equation}
iterating the process with $k\in \{r+3,r+4,\dots\}$ and so on, then yields the result.
\end{proof}
\begin{prop}\label{StirZeta}For $r=1$, we have
\begin{equation}
\lim_{n \to \infty} \frac{1}{\left(n!\right)^l}{\displaystyle {n+1 \brack k+1}}^{(l)}=\zeta(\{l\}_k),
\end{equation}
where $\{l\}_n=(\underbrace{l,l,\dots,l}_{n \text{ times}}).$
\end{prop}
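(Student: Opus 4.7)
The plan is to derive this as an almost immediate consequence of Theorem \ref{StirNest}. Specializing that theorem to $r=1$, we have $(r-1)! = 0! = 1$, so the prefactor $\left(\frac{n!}{(r-1)!}\right)^l$ becomes simply $(n!)^l$. Dividing through by $(n!)^l$ gives
\begin{equation*}
\frac{1}{(n!)^l} {n+1 \brack k+1}^{(l)} = \sum_{0 < j_1 < j_2 < \cdots < j_k \leq n} \frac{1}{(j_1 j_2 \cdots j_k)^l},
\end{equation*}
which is exactly the partial sum of the multiple zeta function $\zeta(\{l\}_k)$ truncated at upper index $n$, by the definition \eqref{MultZ}.

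Letting $n \to \infty$ then yields the desired identity, provided we can justify the interchange of limit and sum (equivalently, convergence of the infinite series). This is where I would be careful: the paper's definition \eqref{MultZ} implicitly assumes $\zeta(i_1,\dots,i_k)$ converges, which requires the outermost exponent $i_k > 1$. Since here all exponents equal $l$, one needs $l \geq 2$ for the right-hand side to make sense; I would state this hypothesis explicitly (or note it as the condition under which the proposition is meaningful).

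Assuming $l \geq 2$, convergence is routine: the terms are positive, and $\sum_{0<j_1<\cdots<j_k} (j_1\cdots j_k)^{-l}$ is dominated by $\zeta(l)^k < \infty$, so monotone convergence (the truncated sums increase in $n$) gives
\begin{equation*}
\lim_{n\to\infty} \sum_{0 < j_1 < \cdots < j_k \leq n} \frac{1}{(j_1 \cdots j_k)^l} = \sum_{0 < j_1 < \cdots < j_k} \frac{1}{(j_1 \cdots j_k)^l} = \zeta(\underbrace{l,l,\dots,l}_{k}) = \zeta(\{l\}_k).
\end{equation*}
There is no real obstacle once Theorem \ref{StirNest} is in hand; the only subtlety worth flagging is the convergence condition $l \geq 2$, which is precisely the standard admissibility criterion for multiple zeta values.
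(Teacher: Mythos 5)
Your proof is correct and follows essentially the same route as the paper: specialize Theorem \ref{StirNest} to $r=1$ (so the prefactor is $(n!)^l$) and pass to the limit using the definition \eqref{MultZ}. Your explicit remark on the convergence condition $l \geq 2$ is a worthwhile precision that the paper leaves implicit, but it does not change the argument.
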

\begin{proof}
The proposition follows immediately from the definition of multiple zeta function \eqref{MultZ} as an infinity sums and Theorem \ref{StirNest} for $r=1$.
\end{proof}

\begin{cor}For $k \geq 1$, we have
\begin{itemize}
\item For $l=2$
\begin{equation}\label{zetal2}
\lim_{n \to \infty} \frac{1}{\left(n!\right)^2}\displaystyle {n+1 \brack k+1}^{(2)}=\frac{\pi^{2k}}{(2k+1)!}.
\end{equation}
\item For $l=4$
\begin{equation}\label{zetal4}
\lim_{n \to \infty} \frac{1}{\left(n!\right)^4}\displaystyle {n+1 \brack k+1}^{(4)}=\frac{4(2\pi)^{4 k}}{(4 k+2)!}\left(\frac{1}{2}\right)^{2k+1}.
\end{equation}
\item For $l=6$
\begin{equation}\label{zetal6}
\lim_{n \to \infty} \frac{1}{\left(n!\right)^6}\displaystyle {n+1 \brack k+1}^{(6)}=\frac{6(2\pi)^{6 k}}{(6 k+3)!}.
\end{equation}
\item For $l=8$
\begin{equation}\label{zetal8}
\lim_{n \to \infty} \frac{1}{\left(n!\right)^8}\displaystyle {n+1 \brack k+1}^{(8)}=\frac{\pi ^{8 k}}{(8 k+4)!}2^{8 k+3} \left(\left(1+\frac{1}{\sqrt{2}}\right)^{4k+2}+\left(1-\frac{1}{\sqrt{2}}\right)^{4 k+2}\right).
\end{equation}
\end{itemize}
\end{cor}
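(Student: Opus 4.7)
The plan is to derive all four identities from Proposition~\ref{StirZeta} together with a closed-form evaluation of $\zeta(\{l\}_k)$ for even $l$. Proposition~\ref{StirZeta} already reduces each left-hand side in \eqref{zetal2}--\eqref{zetal8} to the multiple zeta value $\zeta(\{l\}_k)$, so the corollary is really a statement about constant-argument multiple zeta values at even integers.

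The key tool is the identification $\zeta(\{l\}_k)=e_{k}(1,2^{-l},3^{-l},\ldots)$, which together with \eqref{Gesf} and the substitution $t=-z^{l}$ yields
\begin{equation*}
 \sum_{k\geq 0}(-1)^{k}\zeta(\{l\}_k)\,z^{lk}
 \;=\;\prod_{n\geq 1}\Bigl(1-\frac{z^{l}}{n^{l}}\Bigr).
\end{equation*}
Factoring $1-u^{l}=\prod_{j=0}^{l-1}(1-\omega^{j}u)$ with $\omega=e^{2\pi i/l}$ and pairing $\omega^{j}$ with $\omega^{-j}$ recasts the right-hand side as a finite product of factors $\sin(\pi\alpha_{j}z)/(\pi\alpha_{j}z)$, each $\alpha_{j}$ being a square root of an $l$-th root of unity. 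This uses only the classical Euler product $\sin(\pi w)/(\pi w)=\prod_{n\geq 1}(1-w^{2}/n^{2})$.

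Concretely, for $l=2$ one gets $\sin(\pi z)/(\pi z)$ and \eqref{zetal2} drops out of the Taylor expansion. For $l=4$ one gets $\sin(\pi z)\sinh(\pi z)/(\pi z)^{2}$; taking the Cauchy product of the two series, extracting the coefficient of $z^{4k}$, and invoking the elementary imaginary-part identity $\sum_{j}(-1)^{j}\binom{4k+2}{2j+1}=\mathrm{Im}\,(1+i)^{4k+2}=(-1)^{k}2^{2k+1}$ gives \eqref{zetal4}. For $l=6$ and $l=8$ the additional factors are $\sin(\pi e^{\pm i\pi/3}z)/(\pi e^{\pm i\pi/3}z)$ and $\sin(\pi e^{\pm i\pi/4}z)/(\pi e^{\pm i\pi/4}z)$ respectively; expanding these via $\sin(a+ib)=\sin a\cosh b+i\cos a\sinh b$ and pairing conjugate factors produces real expressions in sines, cosines and hyperbolic functions of $\pi z/\sqrt 3$ or $\pi z/\sqrt 2$. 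Extracting the coefficient of $z^{lk}$ and applying the same imaginary-part trick then yields, via the binomial theorem applied to $(\cos(\pi z/\sqrt 2)\pm\sin(\pi z/\sqrt 2))^{4k+2}$, the expressions $(1\pm 1/\sqrt 2)^{4k+2}$ in \eqref{zetal8}; a similar but simpler symmetrisation gives \eqref{zetal6}.

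The main obstacle is the bookkeeping for $l=6$ and $l=8$: one has to Cauchy-multiply three or four transcendental series with complex arguments and verify that the conjugate-pair combinations collapse to the clean real expressions stated in the corollary. A convenient sanity check at every step is the $k=1$ specialisation, which must reproduce the Euler values $\zeta(2)=\pi^{2}/6$, $\zeta(4)=\pi^{4}/90$, $\zeta(6)=\pi^{6}/945$ and $\zeta(8)=\pi^{8}/9450$.
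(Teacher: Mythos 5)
Your proposal is correct in substance but takes a genuinely different route from the paper. The paper's proof is essentially a citation: it quotes the closed forms of $\zeta(\{2\}_k)$, $\zeta(\{4\}_k)$, $\zeta(\{6\}_k)$, $\zeta(\{8\}_k)$ from Borwein and Bradley \cite{Bor} and combines them with Proposition \ref{StirZeta}, exactly as you do in your first step. You instead re-derive these evaluations from scratch via $\zeta(\{l\}_k)=e_k(1^{-l},2^{-l},\dots)$, the generating function \eqref{Gesf}, and the Euler product for $\sin(\pi w)/(\pi w)$; this is in fact the classical argument behind the values quoted in \cite{Bor}, so your route makes the corollary self-contained at the cost of the complex-argument bookkeeping for $l=6,8$, which you sketch but do not carry out (the $k=1$ checks you list, including $\zeta(8)=\pi^8/9450$ against \eqref{zetal8}, do confirm the constants). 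Two small inaccuracies in the sketch are worth fixing: the factorization $1-u^l=\prod_{j=0}^{l-1}(1-\omega^j u)$ should be grouped by pairing $\omega^j$ with $-\omega^j=\omega^{j+l/2}$ (not with $\omega^{-j}$), giving quadratic factors $1-\omega^{2j}u^2$, and the resulting $\alpha_j$ are $l$-th roots of unity (square roots of the $(l/2)$-th roots of unity), not square roots of $l$-th roots of unity; your concrete choices $\alpha=i$, $e^{\pm i\pi/3}$, $e^{\pm i\pi/4}$ are nevertheless the right ones, and your $l=4$ computation via $\operatorname{Im}(1+i)^{4k+2}=(-1)^k2^{2k+1}$ matches \eqref{zetal4}. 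Also note that \eqref{Gesf} is stated for finitely many variables, so passing to the infinite product needs the (routine) limit justification for $l\geq 2$. In short: the paper buys brevity by citing \cite{Bor}; your approach buys self-containedness and an explicit mechanism, provided the $l=6,8$ expansions are actually completed.
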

\begin{proof}
Authors in \cite{Bor} give the following special values of multiple zeta function
\begin{eqnarray*}
\zeta(\{2\}_n)&=&\frac{\pi^{2n}}{(2n+1)!},\\
\zeta(\{4\}_n)&=&\frac{4(2\pi)^{4 n}}{(4 n+2)!}\left(\frac{1}{2}\right)^{2n+1},\\
\zeta(\{6\}_n)&=&\frac{6(2\pi)^{6 n}}{(6 n+3)!},\\
\zeta(\{8\}_n)&=&\frac{\pi ^{8 n}}{(8 n+4)!}2^{8 n+3} \left(\left(1+\frac{1}{\sqrt{2}}\right)^{4
   n+2}+\left(1-\frac{1}{\sqrt{2}}\right)^{4 n+2}\right),\\
\end{eqnarray*}
the corollary is a consequence of the previous special cases and Proposition \ref{StirZeta}. 
\end{proof}

\section{Remarks}\label{Sec6}
\begin{itemize}
\item The $(l,r)$-Stirling gives another graphical view of Rooks polynomials of higher dimensions in triangle boards \cite{Kr,Zin} using set partitions.
\item In this work we gives a limit representation of multiple zeta function using $(l,r)$-Stirling numbers. 
\item We can obtain the well-known Euler identity $\zeta(2)=\frac{\pi^2}{6}$ from Equation \eqref{zetal2} for $k=1$.
\end{itemize}
\subsection*{Acknowledgment}
We would like to thank the anonymous reviewers for their suggestions and comments
which improved the quality of the present paper. The paper was partially
supported by the DGRSDT grant \textnumero:C0656701.

\end{document}